\definecolor{labelkey}{gray}{.8}
\definecolor{refkey}{gray}{.8}
\definecolor{darkred}{rgb}{0.9,0.1,0.1}
\definecolor{darkgreen}{rgb}{0,0.5,0}
\newtheorem{theorem}{Theorem}[section]
\newtheorem{lemma}[theorem]{Lemma}
\newtheorem{corollary}[theorem]{Corollary}
\newtheorem{proposition}[theorem]{Proposition}
\theoremstyle{remark}
\renewenvironment{proof}[1][Proof]{ {\itshape \noindent {#1.}} }{$\Box$
\medskip}
\numberwithin{equation}{section}
\newcommand{\R}{\mathbb{R}}
\newcommand{\Z}{\mathbb{Z}}
\newcommand{\E}{\mathbb{E}}
\newcommand{\al}{\alpha}
\newcommand{\B}{\mathcal{B}}
\newcommand{\bbR}{\mathbb{R}}
\newcommand{\Q}{\mathcal{Q}}
\newcommand{\cal}{\mathcal}
\newcommand{\la}{\langle}
\newcommand{\ra}{\rangle}
\newcommand{\bT}{\mathbb{T}}
\newcommand{\bx}{\mathbf{x}}
\begin{document}

\title[Directed polymer on cylinder]{High temperature behaviors of the directed polymer on a   cylinder}
\author{\'Eric Brunet, Yu Gu, Tomasz Komorowski}

\address[\'Eric Brunet]{Laboratoire de Physique de l’\'Ecole normale sup\'erieure, ENS, Universit\'e PSL,
CNRS, Sorbonne Universit\'e, Universit\'e Paris Cit\'e, F-75005 Paris, France}

\address[Yu Gu]{Department of Mathematics, University of Maryland, College Park, MD 20742, USA}

\address[Tomasz Komorowski]{Institute of Mathematics, Polish Academy
  of Sciences, ul. \'{S}niadeckich 8, 00-656, Warsaw, Poland}

\maketitle

\begin{abstract}
In this paper, we study the free energy of the directed polymer on a
cylinder of radius $L$ with the inverse temperature $\beta$. Assuming
the random environment is given by a Gaussian process that is white in
time and smooth in space, with an arbitrary compactly supported spatial covariance function, we obtain precise scaling behaviors of the limiting free energy for high temperatures $\beta\ll1$, followed by large $L\gg1$, in all dimensions.  Our approach is based on a perturbative expansion of the PDE hierarchy satisfied by the multipoint correlation function of the polymer endpoint distribution. For the random environment given by  the $1+1$ spacetime white noise, we derive an explicit expression of the limiting free energy, confirming the result obtained through the replica method in \cite{brunet2000probability}.
\bigskip



\noindent \textsc{Keywords:} Directed polymer, Lyapunov exponent, PDE hierarchy.

\end{abstract}
\maketitle

\section{Introduction}

\subsection{Main result}

The random polymer model studied in this paper is associated with the
following stochastic heat equation (SHE) on the $d$-dimensional  torus $\bT_L^d$
of size $L>0$, i.e.,
{\begin{equation}\label{e.she}
\partial_t u=\frac12\Delta u+\beta uV(t,x), \quad\quad t>0, \,x\in \bT_L^d.
\end{equation}
The $d$ dimensional torus is the product
of $d$ copies of 
$\bT_L$, understood as  the interval $[-\tfrac{L}{2},\tfrac{L}{2}]$
with   identified endpoints.
The random potential $V$ is a Gaussian noise that is white in time and smooth in the spatial variable, and we assume 
\begin{equation}\label{e.cov}
\E \left[V(t,x)V(s,y)\right]=\delta(t-s)R(x-y),\quad  
\,(t,x),\,(s,y)\in \bbR\times \bT_L^d.
\end{equation}
Throughout the paper we assume $R(\cdot) $ belongs to
$C^\infty_0(\bbR^d)$ - the space of    smooth and compactly
supported functions. It is a fixed
non-negative function that does not depend on the parameter $L$. We
consider the case when $L$ is so large that
the  support of $R(\cdot) $ is contained within $\bT_L^d$,  and it is normalized so that
$\int_{\bbR^d} R(z)dz=1$. The parameter
 $\beta>0$, referred to as the inverse of temperature,  controls the strength of
 the noise.} 

Suppose $u(0,x)=\delta(x)$, then $Z_t=\int_{\bT_L^d}u(t,x)dx$ is the partition function of a (point to line) directed polymer model, as can be seen from the Feynman-Kac representation
\begin{equation}\label{e.defZt}
Z_t=\E_B \exp\left\{\beta \int_0^t V(s,B_s)ds-\frac12\beta^2 R(0)t\right\},
\end{equation}
where $\E_B$ is the expectation with respect to the  {standard} Brownian motion
$B$ on $\bT_L^d$ starting from the origin {and independent of the
  noise $V(t,x)$}. It is well-known,  {see e.g. \cite[Theorem 2.5]{GK21},} that the following limit exists 
\begin{equation}\label{e.deffree}
\gamma_L(\beta)=\lim_{t\to\infty}\frac{1}{t}\log Z_t=\lim_{t\to\infty}\frac{1}{t}\E \log Z_t,
\end{equation}
and is the thermodynamic limit of the free energy of the directed
polymer. The $\gamma_L(\beta)$  depends on the particular choice of
the spatial covariance function $R(\cdot)$. Here we are interested in
extracting the universal behaviors in the high temperature regime of
$\beta\to0$, followed by $L\to\infty$. Define the Fourier
transform of $R(\cdot)$ by
\begin{equation}\label{e.defRn}
\hat{R}(\xi)
  {=\int_{\bbR^d}R(x)e^{-i2\pi \xi\cdot x}dx}\quad \mbox{ for any $\xi\in\R^d$}. 
\end{equation}
Since
$R(\cdot)$ is a covariance function, we have $\hat{R}(\xi)\ge0$ for
all $\xi$.
The Fourier
coefficients of the $L$-periodic version of $R$, with 
the  size $L$ of the torus satisfying ${\rm
  supp}\,R(\cdot)\subset[-L/2,L/2]^d$, are given by 
$\hat{R}\left(\frac{n}{L}\right)$, $ n \in \Z^d$.

Here is the main result of the paper:
\begin{theorem}\label{t.mainth}
Fix any $L>0$. Then,
\begin{equation}\label{e.gammaex}
\gamma_L(\beta)=\gamma_L^{(2)}\beta^2+\gamma_L^{(4)}\beta^4+O(\beta^6),\quad
\mbox{as $\beta\ll1$,}
\end{equation}
with 
\[
\gamma_L^{(2)}=-\frac{1}{2L^d}, \quad\quad \gamma_L^{(4)}=-\frac{1}{8\pi^2 L^{2d-2}}\sum_{0\neq n\in \Z^d} \frac{1}{|n|^2}\hat{R}^2\left(\frac{n}{L}\right).
\]
In addition,
\begin{equation}\label{e.limitgamma4}
\begin{aligned}
&\lim_{L\to\infty}\gamma_L^{(4)}= -\frac{1}{24}, & d=1,\\
&\lim_{L\to\infty} \frac{L^2}{\log L} \gamma_L^{(4)}= -\frac{1}{4\pi} , & d=2,\\
&\lim_{L\to\infty} L^d \gamma_L^{(4)}=-\frac{1}{8\pi^2}\int_{\R^d} |\xi|^{-2}\hat{R}^2(\xi)d\xi,&d\geq3.
\end{aligned}
\end{equation}
\end{theorem}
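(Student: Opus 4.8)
The plan is to reduce the free energy to the stationary two-point function of the polymer endpoint distribution, expand that function perturbatively in $\beta^2$ via the correlation-function hierarchy, and finally analyze the resulting lattice sum as $L\to\infty$.

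\medskip
\noindent\textbf{Step 1: reduction to the stationary two-point function.}
Write the SHE \eqref{e.she} in It\^o form and set $\rho_t(x)=u(t,x)/Z_t$ for the quenched endpoint density. Since $\int_{\bT_L^d}\Delta u\,dx=0$, It\^o's formula applied to $\log Z_t$ kills the drift and leaves only the quadratic-variation term, giving
\begin{equation*}
\frac{d}{dt}\,\E\log Z_t=-\frac{\beta^2}{2}\,\E\!\int_{\bT_L^d}\!\int_{\bT_L^d}\rho_t(x)\rho_t(y)R(x-y)\,dx\,dy .
\end{equation*}
By \eqref{e.deffree} and Ces\`aro summation, $\gamma_L(\beta)$ equals the $t\to\infty$ limit of the right-hand side, so $\gamma_L(\beta)=-\tfrac{\beta^2}{2}\int_{\bT_L^d}\!\int_{\bT_L^d} f_2(x,y)R(x-y)\,dx\,dy$, where $f_2(x,y):=\lim_{t\to\infty}\E[\rho_t(x)\rho_t(y)]$ is the stationary two-point correlation.

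\medskip
\noindent\textbf{Step 2: the hierarchy and the $\beta^2$-expansion.}
A second application of It\^o's formula produces the evolution of $\rho_t$ and hence of the correlation functions $f_n(x_1,\dots,x_n)=\E[\rho_t(x_1)\cdots\rho_t(x_n)]$; the normalization by $Z_t$ generates a quadratic nonlinearity that couples $f_2$ to $f_3$ and $f_4$. Because the noise law is symmetric, $\gamma_L(\beta)=\gamma_L(-\beta)$ and the expansion runs in powers of $\beta^2$. At $\beta=0$ the endpoint density is the deterministic torus heat kernel, which relaxes to the uniform density, so $f_n^{(0)}\equiv L^{-nd}$. Substituting $f_2=L^{-2d}+\beta^2 g_2+O(\beta^4)$ into the stationary equation $\partial_t f_2=0$ and evaluating the nonlinear source at the leading order $L^{-nd}$ collapses it to the Poisson equation
\begin{equation*}
\tfrac12(\Delta_x+\Delta_y)g_2(x,y)=L^{-3d}-L^{-2d}R(x-y).
\end{equation*}
The right-hand side has zero average over $\bT_L^d\times\bT_L^d$ (here $\int R=1$ is used), which is exactly the solvability condition, while the constraint $\int\!\int f_2=1$ forces $g_2$ to have zero mean.

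\medskip
\noindent\textbf{Step 3: solving for $g_2$ and reading off the coefficients.}
By translation invariance $g_2(x,y)=h(x-y)$ with $\Delta h=L^{-3d}-L^{-2d}R$. Expanding in the Fourier basis $e^{2\pi i n\cdot z/L}$ on $\bT_L^d$ and using that the $n$-th Fourier coefficient of $R$ is $L^{-d}\hat{R}(n/L)$ (valid since $\mathrm{supp}\,R\subset\bT_L^d$), one reads off $\hat h_0=0$ and $\hat h_n=\frac{L^{2-3d}}{4\pi^2|n|^2}\hat{R}(n/L)$ for $n\neq0$. Then $\gamma_L^{(4)}=-\tfrac12 L^d\int_{\bT_L^d} h R$, and Plancherel yields the stated $\gamma_L^{(4)}=-\frac{1}{8\pi^2L^{2d-2}}\sum_{0\neq n}|n|^{-2}\hat{R}^2(n/L)$, while the $\beta^2$-term reproduces $\gamma_L^{(2)}=-1/(2L^d)$. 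I expect the genuine difficulty to lie not in these computations but in Steps 1--2: proving that the stationary correlation functions exist, depend smoothly on $\beta^2$, and that truncating the hierarchy leaves an honest $O(\beta^6)$ error. This requires uniform-in-$\beta$ control of $f_3,f_4$ and justification of the interchange of $\lim_{t\to\infty}$ with the Taylor expansion, for which the PDE hierarchy together with its relaxation estimates is the main tool.

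\medskip
\noindent\textbf{Step 4: the $L\to\infty$ asymptotics.}
It remains to analyze $S_L:=\sum_{0\neq n\in\Z^d}|n|^{-2}\hat{R}^2(n/L)$, which I would split according to the behavior of $\hat{R}^2(\xi)/|\xi|^2$ near $\xi=0$, where $\hat{R}^2(\xi)=1+O(|\xi|^2)$ since $\hat{R}(0)=1$ and $\hat{R}$ is smooth and even. For $d=1$, $|n|^{-2}$ is summable and dominated convergence gives $S_L\to\sum_{n\neq0}n^{-2}=\pi^2/3$, whence $\gamma_L^{(4)}\to-\tfrac{1}{24}$. For $d\geq3$ the integral $\int_{\R^d}|\xi|^{-2}\hat{R}^2(\xi)\,d\xi$ converges, and writing $\xi=n/L$ identifies $L^{2-d}S_L=L^{-d}\sum_{\xi}\hat{R}^2(\xi)/|\xi|^2$ as a Riemann sum of this integral, so $L^d\gamma_L^{(4)}\to-\frac{1}{8\pi^2}\int_{\R^d}|\xi|^{-2}\hat{R}^2$. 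The borderline case $d=2$ is the most delicate: the integral diverges logarithmically, the rapid decay of $\hat{R}$ cuts the sum off at $|n|\sim L$ and the lattice cuts it off at $|n|\sim1$, so that $S_L=2\pi\log L+O(1)$, with the factor $2\pi$ coming from the angular integration and the coefficient $\hat{R}^2(0)=1$; this yields $\frac{L^2}{\log L}\gamma_L^{(4)}\to-\frac{2\pi}{8\pi^2}=-\frac{1}{4\pi}$.
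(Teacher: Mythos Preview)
Your outline matches the paper's proof essentially step for step: the reduction of $\gamma_L(\beta)$ to the stationary two-point function, the PDE hierarchy for the $Q_n$, the Poisson equation for the first correction $g_2$ (the paper's $g_L$), the Fourier computation of $\gamma_L^{(4)}$, and the dimension-by-dimension analysis of the lattice sum are all carried out exactly as you describe.

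The one place you flag uncertainty---the rigorous $O(\beta^6)$ remainder---is handled in the paper more simply than you suggest. There is no interchange of $t\to\infty$ with a Taylor expansion and no relaxation estimate. Instead, the paper works directly with the \emph{stationary} hierarchy $\tfrac12\Delta Q_n=\beta^2 F_n$, where $F_n$ involves $Q_n,Q_{n+1},Q_{n+2}$. The key input is a uniform-in-$\beta$ moment bound $\sup_x \E[\varrho(x)^n]\le C(n,R,L)$, quoted from~\cite{GK21}, which gives $\|F_n\|_{L^2}\le C$ uniformly in $\beta\in(0,1)$. Since $\Delta^{-1}$ is bounded on mean-zero $L^2$, this immediately yields $\|Q_n-L^{-nd}\|_{L^2}=O(\beta^2)$ for every $n$. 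Feeding this back into the $n=2$ equation shows that the source splits as $\beta^2 L^{-2d}(L^{-d}-R)+\beta^2\mathcal E_\beta$ with $\|\mathcal E_\beta\|_{L^2}=O(\beta^2)$, whence $\|\bar Q_2-\beta^2 g_L\|_{L^2}=O(\beta^4)$ and the $O(\beta^6)$ bound on $\gamma_L$ follows by Cauchy--Schwarz. So the ``genuine difficulty'' you anticipate is resolved by a two-line elliptic bootstrap once the moment bound on the invariant measure is in hand.
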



\subsection{Context}

The study of directed polymers in random environments is an active area in probability and  statistical physics. The interests are in the transversal displacements of the polymer endpoint, the fluctuations of the free energy, the localization behaviors of the sample paths etc.  We refer to the monograph \cite{comets2017directed} for a general introduction to the subject.  The partition function of the directed polymer is naturally connected to the heat equation with a random potential, through the Feynman-Kac representation as \eqref{e.defZt}. After the Hopf-Cole transformation, it is related to the Kardar-Parisi-Zhang (KPZ) equation, which is a default model for interface growth subjected to random perturbations, see the reviews \cite{corwin2012kardar,quastel2015one} on the recent developments on the 1+1 KPZ universality class.

Besides studying the directed polymers on the free space where the
sample paths are spread out without any constraint, there have been
many recent developments on understanding how the underlying geometry
or the boundary conditions affect the large scale behaviors of the
polymer measure and the associated SHE and KPZ problem, see
e.g. \cite{corwin2018open,parekh2019kpz,gerencser2019singular,corwin2021stationary,bryc2021markov,barraquand2021steady,barraquand2021kardar}
and the references therein. In this paper, we consider the   polymers
confined to a cylinder and study the high temperature behaviors
of the limiting free energy. Our study is partly motivated by the
results in
\cite{lacoin2010new,berger2017high,nakashima2014remark,nakashima2019free},
where the same problem has been considered in the whole space. If we
denote the  limiting free energy by $\gamma_\infty(\beta)$ in this
case, it has been shown in the aforementioned works, for a large class of discrete models and as $\beta\to0$,
\begin{equation}\label{e.inf24}
\frac{1}{\beta^4} \gamma_\infty(\beta)\to -\frac{1}{24},  \quad\quad \mbox{ in } d=1,
\end{equation}
and
\begin{equation}\label{e.infpi}
\beta^2\log \gamma_\infty(\beta)\to -\pi, \quad\quad \mbox{ in } d=2.
\end{equation}
The limiting constants $-\frac{1}{24}$ and $-\pi$ are universal as they do not depend on the specific distributions of the underlying random environment. 
Compare to the expansion in \eqref{e.gammaex}, we see some similarity in $d=1$.
It is worth emphasizing that the free energy defined in \eqref{e.deffree} is actually the difference between the quenched and annealed free energies considered in those works. This is only a matter of convention: if we define the partition function by $ \E_B e^{\beta\int_0^t V(s,B_s)ds}$, then the quenched free energy is $t^{-1}\log \E_B e^{\beta\int_0^t V(s,B_s)ds}$, and the annealed free energy is $\tfrac12\beta^2R(0)$, so their difference is precisely $t^{-1}\log Z_t$ with $Z_t$ defined as in \eqref{e.defZt}. It is well-known that the free energy is associated to the localization properties of the polymer paths, and is related to the overlap fraction of two replicas, see e.g. the discussion in \cite[Chapter 5 and 6]{comets2017directed}. Therefore, the study of $ \gamma_L(\beta)$  for small $\beta$ sheds light on the localization properties of the polymer paths in high temperature regimes. 

Another motivation comes from the replica method used to compute the free energy. In \cite{brunet2000probability,brunet2000ground}, the authors considered the same problem of directed polymers   on a cylinder. For the environment of a $1+1$ spacetime white noise, using the Bethe ansatz method, they derived the expansions of the ground state energy $E(n,\beta,L)$ of the Delta Bose gas in $d=1$
\[
\mathcal{H}_n=\frac12\sum_{i=1}^n \nabla_i^2+\beta^2\sum_{1\leq i<j\leq n}\delta(x_i-x_j).
\]
In \cite[Equation (49)]{brunet2000probability}, it says that
\[
E(n,\beta,L)=-\big(\frac{\beta^2}{2L}+\frac{\beta^4}{24}\big)n+c_2n^2+c_3n^3+\ldots,
\]
for some explicit $c_2,c_3,\ldots$.
If the replica method gives the right answer here, then the coefficient of the $O(n)$ term, which is $-\big(\frac{\beta^2}{2L}+\frac{\beta^4}{24}\big)$, 
should be $\gamma_L(\beta)$ in the case of $R(\cdot)=\delta(\cdot)$. We will show in Section~\ref{s.white} that it is indeed the case. Namely, the limiting free energy takes the simple form $-\big(\frac{\beta^2}{2L}+\frac{\beta^4}{24}\big)$ in the case of $d=1, R(\cdot)=\delta(\cdot)$ and for a torus of size $L$. For the same problem on the whole space when $L=\infty$, the replica method also leads to the correct answer \cite{kardar1987replica,chen2015precise,bertini1999long,amir2011probability,sasamoto2010exact,sasamoto2010one,ghosal2020lyapunov}, and the limiting free energy is $-\frac{\beta^4}{24}$. Thus, our result shows that the ``finite size correction'' is given by $-\frac{\beta^2}{2L}$, and this confirms the physics prediction in \cite{krug}. 
 At the end of \cite{brunet2000probability}, the authors mentioned that ``another interesting extension of the present work would be to consider more general correlations of the noise'' and ``one could try to extend the approach to higher dimension as the relation between the directed polymer problem and the quantum Hamiltonian is valid in any dimension''. Our work can be viewed as a preliminary step along this direction, in which we obtain the high temperature expansions of the limiting free energy, for general covariance functions and in all dimensions.

Our approach is based on a formula that relates $\gamma_L(\beta)$ with the replica overlap of the polymer measure. The idea is to  perform a semi-martingale decomposition of $\log Z_t$, see e.g. \cite[Chapter 5]{comets2017directed}. After taking the expectation, the only contribution to $t^{-1}\log Z_t$ comes from the drift and can be expressed as a time average of  the overlap fraction of two replicas. 
On the cylinder, the polymer endpoint distribution converges exponentially fast to the stationary distribution, see the proofs in \cite{GK21,rosati2019synchronization} and the related results for stochastic Burgers equation \cite{sinai1991two}. The overlap fraction of two replicas is simply related to the two-point correlation function of the stationary distribution. In this way, the limiting free energy can be written explicitly as an integral involving the two-point correlation function of the stationary distribution and the spatial covariance function of the random environment, see \eqref{e.freeen} below.

On the cylinder, the stationary distribution of the polymer endpoint is related to that of the  KPZ equation (modulo a constant) and  to the stochastic Burgers equation. It is well-known that for the $1+1$ spacetime white noise, the stationary distribution of the KPZ equation is the Brownian bridge \cite{bertini1997stochastic,funaki2015kpz,gubinelli2017kpz,hairer2018strong}. Using this connection and Yor's formula for the density of exponential functionals of
Brownian bridge \cite{yor1992some}, the limiting free energy can be written down explicitly in this case, see Proposition~\ref{p.white} in Section~\ref{s.white}. For the noise with a general covariance structure, which is the main interest of this paper, there are no explicit formulas of the invariant measure. We proceed in a different way, using a partial differential equation (PDE) hierarchy satisfied by the $n-$point correlation functions of the stationary distribution, see \eqref{e.hierarchy} below. The PDE hierarchy was derived in \cite{GH20} on the whole space, and it admits a stationary solution  on the cylinder. An asymptotic expansion in $\beta^2$ on the level of the hierarchy leads to the corresponding expansion of the limiting free energy. The approach is surprisingly simple, and we can actually obtain the expansion in $\beta^2$ up to any order, see the discussion in Section~\ref{s.extension} below.

The same approach does not apply to the problem on the whole space. As
$t$ goes to infinity, the polymer endpoint spreads to infinity, hence
there is no   equilibrium. Nevertheless, the replica overlap is
invariant under the shift of the polymer endpoint. By embedding the
endpoint distribution into an abstract space, which factors out the
spatial shift, significant progress has been made recently on
the localization properties of the endpoint distribution, see \cite{bates2020endpoint,bakhtin2020localization,broker2019localization}. In this case, the limiting free energy
can be expressed as the solution of a variational problem, generalizing \eqref{e.freeen} in a sense. 

We mention two recent papers  on a nonlinear version of \eqref{e.she} on torus \cite{khoshnevisan2020dissipation,khoshnevisan2021dissipation}, where the dissipation rate was studied, i.e., how fast $ u(t,x)$ decays to zero. Among other interesting results, a stronger version of \eqref{e.deffree} was estabilished in the linear setting, see \cite[Theorem 1.3]{khoshnevisan2021dissipation}.

The rest of the paper is organized as follows. In Section~\ref{s.hierarchy}, we prove some preliminary results on the endpoint distribution of the directed polymer and derive the PDE hierarchy satisfied by the $n-$point correlation functions. Section~\ref{s.proof} is devoted to the asymptotic analysis of the PDE hierarchy and the proof of the main theorem. In Section~\ref{s.white}, we discuss the case of the $1+1$ spacetime white noise and some further extensions. 

\subsection*{Acknowledgements}
Y.G. was partially supported by the NSF through DMS-1907928/2042384. 
 T.K. acknowledges the support of NCN grant 2020/37/B/ST1/00426. We thank Bernard Derrida for interesting comments on the draft.

\section{Endpoint distributions of directed polymers  and a PDE hierarchy}
\label{s.hierarchy}

With $u$ solving \eqref{e.she} started from the initial data, { that is
given by 
a  non-trivial Borel measure}, define 
\begin{equation}\label{e.defrho}
\rho(t,x)=\frac{u(t,x)}{\int_{\bT_L^d}u(t,x')dx'},
\end{equation}
which is the quenched density of the endpoint distribution of polymer
of length $t$. We emphasize that $\rho(t,x)$ actually depends on
$\beta$ and we have kept the dependence implicit in our
  notation. Since we are interested in the high temperature regime,
i.e.  $\beta\ll1$, throughout the rest of the paper we assume $\beta\in(0,1)$. 

We first prove some results on $\rho(t,\cdot)$ and its relation to the
free energy $\gamma_L(\beta)$, some of which were obtained in
\cite{GK21}.

Let  $\mathcal{M}_1(\bT^d_L)$ be the space of Borel probability
measures on $\bT^d_L$ and  $\Z_+$ be the set of non-negative
integers. Denote by $ D(\bT^d_L)$ and $ D_c(\bT^d_L)$  the respective spaces of
all Borel measurable and continuous densities on the torus
$\bT_L^d$.
\begin{proposition}\label{p.rho}
$\{\rho(t,\cdot)\}_{t\geq0}$ is an $\mathcal{M}_1(\bT^d_L)$-valued
Markov process. In fact, for any $t>0$ it  takes values in $D_c(\bT^d_L)$.
The process has a unique invariant measure $\pi$   that is supported
on  $D_c(\bT^d_L)$.

Let $\varrho$ be a $D_c(\bT^d_L)$-valued  random variable with the distribution given by $\pi$, then the free energy can be expressed as 
\begin{equation}\label{e.freeen}
\gamma_L(\beta)=-\frac12\beta^2\int_{(\bT_L^{d})^2} R(x-y) \E [ \varrho(x)\varrho(y)]dxdy.
\end{equation}
In addition, $\{\varrho(x): x\in \bT_L^d\}$ is a continuous
trajectory, stationary {random
field}. For any $n\in \Z_+$ we have
{ \begin{equation}\label{e.mmesti}
C_*(n,R,L):= \E \big[\sup_{x\in \bT_L^d}\varrho(x)^n\big] <+\infty.
 \end{equation}}
 \end{proposition}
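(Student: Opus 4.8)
The plan is to treat the four assertions in turn, reserving most of the effort for the moment bound \eqref{e.mmesti}.

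\textbf{Markov property and continuity.} By linearity of \eqref{e.she} there is, for $s<t$, a strictly positive linear solution operator $\U_{s,t}$, measurable with respect to the noise $V$ on $[s,t]$ and independent of $\mathcal{F}_s$, with $u(t,\cdot)=\U_{s,t}u(s,\cdot)$. Since $\U_{s,t}$ is linear and $u(s,\cdot)=Z_s\,\rho(s,\cdot)$,
\[
\rho(t,\cdot)=\frac{\U_{s,t}\rho(s,\cdot)}{\int_{\bT_L^d}\U_{s,t}\rho(s,\cdot)\,dx},
\]
which depends on the past only through $\rho(s,\cdot)$ and on noise independent of $\mathcal{F}_s$; hence $\{\rho(t,\cdot)\}$ is Markov. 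For $t>0$, continuity and strict positivity of $u(t,\cdot)$ follow from parabolic smoothing in the mild formulation, the heat kernel $G_t$ regularizing the spatially smooth noise, so $\rho(t,\cdot)\in D_c(\bT_L^d)$. These points are essentially contained in \cite{GK21}.

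\textbf{Invariant measure.} Here I would invoke the exponential mixing of \cite{GK21,rosati2019synchronization}: on the compact torus the positive operator $\U_{s,t}$ contracts the Hilbert projective metric, yielding a unique invariant law $\pi$ on $D_c(\bT_L^d)$ and exponential convergence to it from any initial density. The translation invariance of \eqref{e.she} --- the Laplacian is shift-invariant and $R(x-y)$ is spatially stationary --- forces $\pi$ to be invariant under torus shifts, so $\varrho$ is a continuous, spatially stationary random field.

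\textbf{Free energy identity.} Applying It\^o's formula to $\log Z_t$, and using that $\int_{\bT_L^d}\tfrac12\Delta u\,dx=0$ on the torus, gives
\[
d\log Z_t=\beta\int_{\bT_L^d}\rho(t,x)V(t,x)\,dx\,dt-\frac{\beta^2}{2}\int_{(\bT_L^d)^2}R(x-y)\rho(t,x)\rho(t,y)\,dx\,dy\,dt,
\]
where the first term is a martingale and the second is the finite-variation part coming from $d\langle Z\rangle_t$. Taking expectations removes the martingale; the overlap functional $\rho\mapsto\iint R(x-y)\rho(x)\rho(y)\,dx\,dy$ is bounded by $\|R\|_\infty$ and weakly continuous, so dividing by $t$ and letting $t\to\infty$, the Ces\`aro average of its expectation converges, by the exponential mixing, to the stationary value, which is exactly \eqref{e.freeen}.

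\textbf{Moment bound and main obstacle.} The genuine difficulty is \eqref{e.mmesti}. The natural route is uniform-in-time estimates $\sup_{t\ge1}\E\|\rho(t,\cdot)\|_{H^k}^n<\infty$ for all $k,n$, followed by the Sobolev embedding $H^k\hookrightarrow C(\bT_L^d)$ with $k>d/2$ and a Skorokhod/Fatou passage to the limit $t\to\infty$ to transfer the bound to $\varrho$. The obstacle is that one cannot treat numerator and denominator of $\rho=u/Z_t$ separately: the unnormalized solution has exponentially growing moments, $\E[u(t,x)^2]$ blowing up because the SHE on the torus has a strictly positive second-moment Lyapunov exponent, so only the cancellation in the ratio keeps $\rho$ bounded. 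I would therefore work with the closed nonlinear equation satisfied by $\rho$, obtained from \eqref{e.she} by the quotient rule with its It\^o corrections, and run energy estimates directly on it: the diffusion $\tfrac12\Delta\rho$ supplies dissipation, the conservation $\int_{\bT_L^d}\rho\,dx=1$ together with the centered multiplicative form of the noise controls the stochastic terms, and in stationarity the resulting balance bounds the moments of $\|\varrho\|_{H^k}$. Making this energy argument close, uniformly in time and for every moment, is the part I expect to be the most delicate.
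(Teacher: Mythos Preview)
Your proposal is correct and essentially aligned with the paper's approach. The paper's own proof is almost entirely by citation to \cite{GK21}: the Markov property and uniqueness of the invariant measure are \cite[Theorem~2.3]{GK21}, the free energy identity is \cite[Equation~(2.22)]{GK21}, and the moment bound \eqref{e.mmesti} is \cite[Lemma~4.9]{GK21}; your sketch unpacks precisely what those cited arguments do (semi-martingale decomposition of $\log Z_t$, Hilbert-metric contraction, etc.). One small methodological difference: for the spatial stationarity of $\varrho$, the paper argues by starting from the flat initial datum $\rho(0,\cdot)\equiv L^{-d}$, observing that $\{\rho(t,x)\}_x$ is stationary for each $t$, and passing to the limit via $\rho(t,\cdot)\Rightarrow\varrho(\cdot)$, whereas you deduce it directly from shift-invariance of the dynamics together with uniqueness of $\pi$; both are valid and equally short. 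Your energy-estimate outline for \eqref{e.mmesti} is a reasonable description of the obstacle and a plausible route, though the paper simply imports the result rather than reproving it here.
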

\begin{proof}
Throughout the proof, we use $C$ to denote a generic constant that may
depend on $n,R(\cdot),L$, but not on $\beta\in(0,1)$, and may
change from line to line.

The fact that the Markov process $\{\rho(t,\cdot)\}_{t\geq0}$ has a
unique invariant measure that is supported on the space of positive, continuous
densities on $\bT_L^d$ has been proved in \cite[Theorem 2.3]{GK21}. 
The expression of the free energy in \eqref{e.freeen} was given in \cite[Equation (2.22)]{GK21}.

To show the stationarity of the field $\{\varrho(x): x\in\bT_L^d\}$,
we start from the flat initial data $u(0,x)\equiv1$ so that
$\rho(0,x)\equiv L^{-d}$. By \cite[Theorem 2.3]{GK21}, we have
$\rho(t,\cdot)\Rightarrow \varrho(\cdot)$ in distribution on
$C(\bT_L^d)$, as $t\to\infty$. Note that, for each fixed $t>0$, the
field $\{\rho(t,x): x\in\bT_L^d\}$ is stationary, and this in turn
implies the stationarity of $\varrho$.

Estimate \eqref{e.mmesti} is a consequence of   \cite[Lemma 4.9]{GK21}.
\end{proof}

Throughout the rest of the paper, we assume that $\varrho$ is sampled from $\pi$. For any $n\geq 1$ and $\bx_{1:n}=(x_1,\ldots,x_n)\in\big(\bT_L^d\big)^n$, define the $n-$point correlation function of the stationary random field $\varrho(x)$:
\begin{equation}\label{e.defQn}
Q_n(\bx_{1:n})=Q_n(x_1,\ldots,x_n)=\E\big[\prod_{j=1}^n \varrho(x_j)\big].
\end{equation}
By Proposition~\ref{p.rho}, we have
\begin{equation}\label{e.bdQn}
\sup_{\bx_{1:n}\in (\bT_L^d)^n} Q_n(\bx_{1:n}) \leq C(n,R,L),
\end{equation}
{and $Q_n(\cdot)$ is a continuous function jointly in all its variables on $(\bT_L^d)^n$}.
Again to simplify the notations, we have kept the dependence of $Q_n$ on $\beta$ and $L$ implicit. Also note that since $\varrho(\cdot)$ is stationary and $\int_{\bT_L^d}\varrho(x)dx=1$, we actually have $Q_1(x)\equiv L^{-d}$. 

Here is the main result of this section
\begin{proposition}\label{p.hierarchy}
For any $n\geq1$, $Q_n:(\bT_L^d)^n\to \R_+$ is a smooth function, and the sequence $\{Q_n\}_{n\geq 1}$ solves the following PDE hierarchy: for any $n\geq1$, 
\begin{equation}\label{e.hierarchy}
\begin{aligned}
\frac12&\Delta Q_n+\beta^2\sum_{1\leq i<j\leq n} R(x_i-x_j)Q_n\\
=&\beta^2n\int_{\bT_L^d}Q_{n+1}(\bx_{1:n},x_{n+1})\sum_{i=1}^n R(x_i-x_{n+1})dx_{n+1}\\
&-\beta^2\frac{n(n+1)}{2}\int_{\bT_L^d\times \bT_L^d} Q_{n+2}(\bx_{1:n},x_{n+1},x_{n+2})R(x_{n+1}-x_{n+2})dx_{n+1}dx_{n+2}.
\end{aligned}
\end{equation}
\end{proposition}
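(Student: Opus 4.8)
The plan is to realize the stationary correlation functions $Q_n$ as the $t\to\infty$ limit of the time-dependent objects $Q_n(t,\bx_{1:n})=\E[\prod_{j=1}^n\rho(t,x_j)]$, to write a closed evolution equation for the latter by Itô calculus, and then to kill the time derivative using stationarity. Since the noise $V$ is white in time and, because $R\in C^\infty_0$, spatially smooth, the Gaussian field $W$ with $\E[W(t,x)W(s,y)]=(t\wedge s)R(x-y)$ has a trace-class covariance on $L^2(\bT_L^d)$, so the relevant stochastic calculus is that of a genuine Itô SPDE and the quadratic-variation computations below are the real ones rather than formal manipulations.

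First I would derive a self-contained SPDE for the endpoint density $\rho=u/Z_t$, where $Z_t=\int_{\bT_L^d}u$. Writing the Itô SHE as $du=\tfrac12\Delta u\,dt+\beta u\,dW$, one has $dZ_t=\beta\int_{\bT_L^d}u\,dW$ (the Laplacian integrates to zero on the torus), with $d\langle Z\rangle_t=\beta^2\int\!\int u(x')u(y')R(x'-y')dx'dy'\,dt$ and $d\langle u(\cdot,x),Z\rangle_t=\beta^2 u(x)\int u(y')R(x-y')dy'\,dt$. The Itô quotient rule then produces a closed equation for $\rho$ alone,
\[
d\rho(t,x)=\Big[\tfrac12\Delta\rho-\beta^2\rho\!\int R(x-\cdot)\rho+\beta^2\rho\!\int\!\!\int\! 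R\,\rho\rho\Big]dt+\beta\rho\,dW(x)-\beta\rho\!\int\rho(y')\,dW(y')dy'.
\]

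Next I would apply Itô's formula to the product $\prod_{j=1}^n\rho(t,x_j)$ and take expectations; the martingale terms drop, leaving two families of drift contributions. The first comes from the individual drifts of each factor and produces $\tfrac12\Delta Q_n$ together with $-\beta^2\sum_i\int R(x_i-\cdot)Q_{n+1}$ and $+\beta^2 n\int\!\int R\,Q_{n+2}$. The second comes from the pairwise quadratic covariations $d\langle\rho(\cdot,x_i),\rho(\cdot,x_j)\rangle$, whose martingale parts couple through $\langle W(x_i),W(x_j)\rangle=R(x_i-x_j)\,dt$; this yields $+\beta^2\sum_{i<j}R(x_i-x_j)Q_n$, a contribution $-\beta^2(n-1)\sum_i\int R(x_i-\cdot)Q_{n+1}$ (using that each index lies in exactly $n-1$ pairs), and $+\beta^2\binom{n}{2}\int\!\int R\,Q_{n+2}$. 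Summing the two families, the coefficients of the $Q_{n+1}$ integrals combine to $-\beta^2 n$ and those of the $Q_{n+2}$ integral to $+\beta^2\tfrac{n(n+1)}{2}$; setting $\partial_t Q_n\equiv0$ by stationarity then gives exactly \eqref{e.hierarchy}. All interchanges of expectation and integration are legitimate because the moment bounds \eqref{e.mmesti}–\eqref{e.bdQn} render every integrand uniformly integrable over the compact torus.

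For the smoothness claim I would argue that, since $R\in C^\infty_0$, the multiplicative potential is spatially smooth, so parabolic regularity for the SHE yields $u(t,\cdot)\in C^\infty(\bT_L^d)$ and hence $\varrho(\cdot)\in C^\infty$ almost surely; differentiating $Q_n=\E[\prod_j\varrho(x_j)]$ under the expectation, justified by moment estimates on spatial derivatives of $\varrho$, gives $Q_n\in C^\infty$. Alternatively the hierarchy itself provides an elliptic bootstrap: reading \eqref{e.hierarchy} as $\tfrac12\Delta Q_n=$ (continuous data built from $Q_n,Q_{n+1},Q_{n+2}$ and $R$), Schauder estimates upgrade regularity, while the convolution structure $\int R(x_i-\cdot)Q_m$ transfers smoothness in $x_i$ onto $R$, so one gains derivatives inductively along the hierarchy. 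The main obstacle I anticipate is not the algebra, which is essentially forced, but the rigorous justification of the infinite-dimensional Itô calculus for $\rho$ and of the passage to stationarity: one must verify that $Q_n(t,\cdot)$ converges, together with its drift, to the stationary $Q_n$ as $t\to\infty$, for which the exponential relaxation of $\rho(t,\cdot)$ to $\pi$ from \cite{GK21} combined with the uniform moment bounds \eqref{e.mmesti} is the key input.
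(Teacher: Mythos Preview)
Your approach is essentially the paper's: derive the time-dependent hierarchy for $\Q_n(t,\cdot)=\E[\prod_j\rho(t,x_j)]$ by It\^o calculus on the quotient $\rho=u/Z_t$, then pass to the stationary version, and obtain smoothness by an elliptic bootstrap on $\tfrac12\Delta Q_n=F$. The paper in fact quotes the time-dependent hierarchy from \cite{GH20} rather than redoing the It\^o computation you sketch, and for regularity it uses exactly your second alternative (Poisson equation, Weyl's lemma, then bootstrap through the hierarchy).

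There is one simplification worth adopting. You frame the passage to stationarity as a $t\to\infty$ limit and flag as the main obstacle the need to control convergence of $\Q_n(t,\cdot)$ together with its drift. The paper bypasses this entirely: it \emph{initializes} $\rho(0,\cdot)$ with law $\pi$, so that $\Q_n(t,\cdot)=Q_n(\cdot)$ identically for all $t\ge0$, and the integrated weak-form hierarchy immediately collapses to $\langle\tfrac12\Delta f,Q_n\rangle+\beta^2\sum_{k=0}^2\langle f_k,Q_{n+k}\rangle=0$ with no limits to justify. This removes the obstacle you anticipated at no cost.
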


\begin{proof}
To show \eqref{e.hierarchy}, we make use of a dynamic version proved in \cite{GH20}. Define
\[
\Q_n(t,\bx_{1:n})=\E\big[\prod_{j=1}^n \rho(t,x_j)\big],
\]
with $\rho(t,x)$ given by \eqref{e.defrho}. Then \cite[Theorem 1.1]{GH20} shows that $\{\Q_n\}_{n\geq1}$ satisfies the following hierarchy: for any $n\geq1, T>0$, and $f\in C^\infty\big((\bT_L^d)^n\big)$, 
\begin{equation}\label{e.parahi}
\begin{aligned}
\la f,\Q_n(T)\ra=\la f,\Q_n(0)\ra+\int_0^T \la \frac12\Delta f,\Q_n(t)\ra dt+\beta^2\sum_{k=0}^2 \int_0^T \la f_k,\Q_{n+k}(t)\ra dt,
\end{aligned}
\end{equation}
where 
\[
\begin{aligned}
&f_0(\bx_{1:n})=f(\bx_{1:n})\sum_{1\leq i<j\leq n}R(x_i-x_j),\\
&f_1(\bx_{1:n},x_{n+1})=-nf(\bx_{1:n})\sum_{i=1}^n R(x_i-x_{n+1}),\\
&f_2(\bx_{1:n},x_{n+2})=\frac12n(n+1)f(\bx_{1:n})R(x_{n+1}-x_{n+2}),
\end{aligned}
\]
and the brackets $\la\cdot,\cdot\ra$ in \eqref{e.parahi} are the corresponding $L^2$ inner product. Assume $\rho(0,\cdot)$ is sampled from the invariant measure $\pi$, then we have 
\[
\Q_n(t,\bx_{1:n})=Q_n(\bx_{1:n}), \quad\quad \mbox{ for all }t\geq0, n\geq1,
\]
so \eqref{e.parahi} actually becomes 
\begin{equation}
\label{010910-21}
\la \frac12\Delta f,Q_n\ra+\beta^2 \sum_{k=0}^2 \la f_k, Q_{n+k}\ra=0,
\end{equation}
which is  the weak formulation of \eqref{e.hierarchy}.
Let 
\begin{align}
  \label{F}
&F(\bx_{1:n}):=\beta^2 \Big\{
n\int_{\bT_L^d}Q_{n+1}(\bx_{1:n},x_{n+1})\sum_{i=1}^n
R(x_i-x_{n+1})dx_{n+1}\notag\\
&
- \frac{n(n+1)}{2}\int_{\bT_L^d\times \bT_L^d}
                           Q_{n+2}(\bx_{1:n},x_{n+1},x_{n+2})R(x_{n+1}-x_{n+2})dx_{n+1}dx_{n+2}\\
  &
    -\sum_{1\leq i<j\leq n}R(x_i-x_j)
Q_n(\bx_{1:n})\Big\}.\notag
\end{align}
The function $F$ is continuous and, substituting $f\equiv 1$ into
\eqref{010910-21}, we conclude that
$$
\int_{(\bT_L^d)^n}F(\bx_{1:n}) d
\bx_{1:n}=0.
$$ Therefore the Poisson equation
$\frac12\Delta \tilde Q_n=F$ has a unique, up to a constant,
solution $\tilde Q_n$ that belongs to any  Sobolev
space  $W^{2,p}\big((\bT^d_L)^n\big)$, $p\in[1,+\infty)$ - consisting of
functions with two generalized derivatives that are $L^p$
integrable. The function $Q_n-\tilde Q_n$ is harmonic on
$(\bT^d_L)^n$, in the weak sense, therefore, by the Weyl lemma, see
e.g. \cite[Theorem 2.3.1, p. 42]{morrey}, it is harmonic in the strong sense.
As a result $Q_n\in W^{2,p}\big((\bT^d_L)^n\big)$ for any
$p\in[1,+\infty)$ and $n\ge1$. Hence also $F\in
W^{2,p}\big((\bT^d_L)^n\big)$. This, by an application of the apriori
estimates, allows us to conclude that in fact $Q_n\in W^{4,p}\big((\bT^d_L)^n\big)$  for any
$p\in[1,+\infty)$ and $n\ge1$.  Using a bootstrap argument,  we can conclude that 
$Q_n\in C^\infty\big((\bT^d_L)^n\big)$ for any
 $n\ge1$. Since $Q_n$ are smooth functions, we know that they are classical solutions to \eqref{e.hierarchy}, which completes the proof.
\end{proof}

The next result is on the sample path properties of $\varrho$. We first introduce some notation: let
\begin{align*}
 &
D_\infty(\bT^d_L):= D(\bT^d_L)\cap C^\infty(\bT^d_L)\\
&
=\{f:\bT_L^d\to\R: 0\leq f\in C^\infty(\bT_L^d), \int_{\bT_L^d}f(x)dx=1\}.
\end{align*}
\begin{corollary}
  The field
$\{\varrho(x):\,x\in\bT_L^d\}$ has smooth realizations, i.e. the invariant
measure $\pi$ is
supported on $D_\infty(\bT^d_L)$. In addition, for any   $n\in \Z_+$
and multiindex $\alpha=(\alpha_1,\ldots,\alpha_d)$ with
$\alpha_j\in\Z_{+},\, j=1,\ldots,d$, there exists a constant $C=C(n,\alpha,R,d,L)$ 
 such that 
 \begin{equation}\label{e.mmesti}
 \E \big[\sup_{x\in \bT_L^d}|\partial^\alpha\varrho(x)|^n\big] \leq C(n,\alpha,R,L).
 \end{equation}
  \end{corollary}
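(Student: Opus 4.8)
The plan is to treat the two assertions separately. First I would show that the invariant measure $\pi$ charges only smooth densities, exploiting the instantaneous smoothing of \eqref{e.she} together with the stationarity built in Proposition~\ref{p.rho}. Then, granting smoothness, I would bootstrap the zeroth-order bound \eqref{e.mmesti} of Proposition~\ref{p.rho} to all derivatives by feeding the smoothness of the correlation functions $Q_n$ from Proposition~\ref{p.hierarchy} into a Sobolev embedding.

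For the smoothness claim, the key input is that for any $t>0$ the solution $u(t,\cdot)$ of \eqref{e.she} is almost surely smooth in space. Writing the mild (Duhamel) form with the torus heat kernel $p_s(x,y)$, the stochastic convolution $\beta\int_0^t\int_{\bT_L^d}p_{t-s}(x,y)u(s,y)V(s,y)\,dy\,ds$ can be differentiated in $x$ to any order; the kernel singularity $|\partial_x^\alpha p_{t-s}(x,y)|\les (t-s)^{-|\alpha|/2}$, modulo Gaussian factors, is integrable after a Burkholder--Davis--Gundy estimate that uses the moment bounds on $u$ and the spatial smoothness of $R$ (hence of $V(s,\cdot)$). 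This is the parabolic/Feynman--Kac regularity already underlying the continuity statement of \cite[Theorem 2.3]{GK21}. Consequently $\rho(t,\cdot)$ defined by \eqref{e.defrho} is a.s. an element of $D_\infty(\bT_L^d)$ for every $t>0$. I would then run the Markov process with $\rho(0,\cdot)$ sampled from $\pi$, so that $\rho(t,\cdot)\sim\pi$ for all $t\ge0$ by invariance; taking $t=1$ and using that $\rho(1,\cdot)$ is a.s. smooth gives $\pi(D_\infty(\bT_L^d))=\Pb(\rho(1,\cdot)\in D_\infty(\bT_L^d))=1$, i.e. $\varrho$ has smooth realizations.

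For the moment bound, fix a multiindex $\gamma$ and an even integer $q$, and let $\varrho_\eps=\varrho*\phi_\eps$ be a torus mollification; since $\varrho$ is a.s. smooth, $\partial^\gamma\varrho_\eps(x)\to\partial^\gamma\varrho(x)$ a.s. as $\eps\to0$. Expanding the $q$-th moment and integrating by parts using the smoothness of $Q_q$ yields
\[
\E\big[(\partial^\gamma\varrho_\eps(x))^{q}\big]=\int_{(\bT_L^d)^q}\prod_{i=1}^{q}\phi_\eps(x-y_i)\,\partial_{y_1}^\gamma\cdots\partial_{y_q}^\gamma Q_q(y_1,\ldots,y_q)\,dy\xrightarrow[\eps\to0]{}\big[\partial_{y_1}^\gamma\cdots\partial_{y_q}^\gamma Q_q\big](x,\ldots,x).
\]
As $q$ is even the left-hand integrand is nonnegative, so Fatou's lemma gives $\E[(\partial^\gamma\varrho(x))^{q}]\le\sup_{(\bT_L^d)^q}|\partial_{y_1}^\gamma\cdots\partial_{y_q}^\gamma Q_q|<\infty$, finite and $x$-independent because $Q_q\in C^\infty((\bT_L^d)^q)$ is continuous on a compact set (and stationary). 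To pass to \eqref{e.mmesti}, given $n$ and $\alpha$ I would pick an even $q\ge\max(n,2)$ and an integer $s$ with $sq>d$; the embedding $W^{s,q}(\bT_L^d)\hookrightarrow L^\infty(\bT_L^d)$ gives $\sup_x|\partial^\alpha\varrho|\le C\|\varrho\|_{W^{|\alpha|+s,q}}$, whence by Jensen (since $n\le q$) and stationarity
\[
\E\big[\sup_x|\partial^\alpha\varrho|^n\big]\le C^n\big(\E\|\varrho\|_{W^{|\alpha|+s,q}}^{q}\big)^{n/q}=C^n\Big(\sum_{|\gamma|\le|\alpha|+s}\int_{\bT_L^d}\E|\partial^\gamma\varrho(x)|^{q}\,dx\Big)^{n/q}<\infty,
\]
which is the desired estimate.

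The main obstacle is the a.s. spatial smoothness of $u(t,\cdot)$ for $t>0$ underpinning the first claim; this is where the genuine analysis lives, and I would either invoke the regularity theory already used in \cite{GK21} or supply the Duhamel/BDG estimate sketched above. The second claim is by contrast essentially bookkeeping once smoothness is in hand: the only delicate point, justifying differentiation under the expectation, is circumvented by the mollification-plus-Fatou argument, which needs only the one-sided inequality and so avoids any domination hypothesis on the a priori uncontrolled derivatives of $\varrho$.
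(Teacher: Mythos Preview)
Your proof is correct and the second half (the moment bound via pointwise estimates from the smoothness of $Q_q$ followed by Sobolev embedding) is essentially the paper's argument, carried out with more care: the paper simply asserts that $\E|\partial^\alpha\varrho(x)|^n<\infty$ is ``a simple consequence of the existence of an appropriate derivative of the function $Q_n$'' and then applies the same Sobolev embedding, whereas your mollification-plus-Fatou step makes the passage $\E[(\partial^\gamma\varrho)^q]\le \partial_{y_1}^\gamma\cdots\partial_{y_q}^\gamma Q_q$ rigorous without assuming a priori integrability of the derivatives.

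The genuine difference is in the first half. The paper does \emph{not} use parabolic smoothing of the SHE at all; instead it argues intrinsically from the random field side: since each $Q_n$ is $C^\infty$ (Proposition~\ref{p.hierarchy}), a theorem of Adler gives that $\partial^\alpha\varrho(x)$ exists in $L^2(\pi)$ for every $x$, and then a Kolmogorov--Kunita continuity criterion upgrades this to an a.s.\ continuous modification of each derivative field. Your route---instantaneous $C^\infty$ regularity of $u(t,\cdot)$ for $t>0$ plus the invariance of $\pi$ under the time-$1$ map---is equally valid and arguably more direct conceptually, but it trades the soft cited results (Adler/Kunita) for a nontrivial SPDE regularity statement that you have to supply or cite separately. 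Either approach buys the same conclusion; the paper's has the advantage of being self-contained modulo black-box references and of reusing the smoothness of $Q_n$ that was already established, while yours has the advantage of being dynamical and not depending on the PDE hierarchy.
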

  \proof
Since the covariance functions $Q_n(\cdot)$ are smooth, \cite[Theorem
2.2.2, p. 27]{adler2010geometry} implies that  $\partial^\alpha\varrho(x)$ exists
for each $x$ in the $L^2(\pi)$ sense. An application of \cite[Theorem
1.4.1, p. 31]{kunita} allows us to conclude that in fact
the derivative field has a.s. continuous modification for each multiindex
$\alpha$. This proves the existence of smooth realizations of the
field $\{\varrho(x):\,x\in\bT_L^d\}$.

It remains to show \eqref{e.mmesti}.  Note that it suffices to prove a weaker statement: that  for any $n\in \Z_+$ and multiindex
$\alpha=(\alpha_1,\ldots,\alpha_d)$ with
$\alpha_j\in\Z_{+},j=1,\ldots,d$ we have
 \begin{equation}\label{e.mmesti1}
C_{*,w}(n,\alpha,R,L):=   \E
\big[|\partial^\alpha\varrho(x)|^n\big]<+\infty.
\end{equation}
The latter is a simple consequence of the existence of an appropriate
derivative of the function $Q_n$.  Note that by stationarity the right hand side does not depend on
$x\in \bT^d_L$.

To prove that \eqref{e.mmesti1} implies \eqref{e.mmesti}, observe
  that by the Sobolev embedding there exists a
deterministic constant $C$ such that
\begin{equation}\label{010810-21}
\sup_{x\in \bT^d_L}|\partial^\alpha\varrho(x)|^n\le
C\|\varrho\|_{W^{k,p}(\bT^d_L)}^n
\end{equation}
for all realizations of $\varrho(\cdot)$,
provided 
$k>d/p+|\al|$. Here $|\al|=\sum_{j=1}^d\al_j$ and
$\|\varrho\|_{W^{k,p}(\bT^d_L)}=\sum_{|\al|\le
  k}\|\partial^\alpha\varrho\|_{L^p(\bT_L^d)}$ is the Sobolev
norm. The estimate \eqref{e.mmesti} is then a consequence of
\eqref{010810-21} and \eqref{e.mmesti1}.
\qed


By \eqref{e.freeen}, the free energy can be expressed in terms of the two-point correlation $Q_2$:
\begin{equation}\label{e.gammaLbeta}
\gamma_L(\beta)=-\frac12\beta^2\int_{(\bT_L^{d})^2}R(x_1-x_2)Q_2(x_1,x_2)dx_1dx_2.
\end{equation}
Thus, to obtain the asymptotics of $\gamma_L(\beta)$ in the high temperature regime of $\beta\to0$, it reduces to studying the asymptotic behaviors of $Q_2$ as $\beta\to0$. For $\beta=0$, the polymer measure degenerates to the Wiener measure, and the Brownian motion on $\bT^d_L$ has the unique stationary distribution given by the uniform measure, in which case the $n-$point correlation function $Q_n$ equals to $L^{-nd}$. The following lemma provides preliminary estimates on the difference between $Q_n$ and $L^{-nd}$ for $\beta\ll1$.

Denote $L_0^2\big((\bT_L^d)^n\big)$ the space of square integrable functions on $(\bT_L^d)^n$ with zero mean, i.e.
\[
L_0^2\big((\bT_L^d)^n\big)=\left\{f\in L^2\big((\bT_L^d)^n\big): \int_{(\bT_L^d)^n}f(\bx_{1:n})
  d{\bx }_{1:n}=0\right\}.
\]
Define 
\begin{equation}\label{e.barQn}
\bar{Q}_n=Q_n-L^{-nd}.
\end{equation}
\begin{lemma}\label{l.errQn}
For any $n\geq2$, we have
\begin{equation}\label{e.err2}
\|\bar{Q}_n\|_{L^2((\bT_L^d)^n)}=O(\beta^2), \quad\mbox{ as }\beta\to0.
\end{equation}
\end{lemma}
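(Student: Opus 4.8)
The plan is to read the bound directly off the stationary hierarchy \eqref{e.hierarchy}, using that its entire right-hand side carries an explicit prefactor $\beta^2$ while all the correlation functions appearing in it are bounded uniformly in $\beta$. Rewriting \eqref{e.hierarchy} and recalling that $\Delta L^{-nd}=0$, we have exactly $\frac12\Delta\bar{Q}_n=F$, where $F=F(\bx_{1:n})$ is the function defined in \eqref{F}. Thus $\bar{Q}_n$ solves a Poisson equation on $(\bT_L^d)^n$ whose source is of order $\beta^2$, and the estimate \eqref{e.err2} should follow from elliptic (equivalently, Fourier/spectral-gap) regularity, provided we control $\|F\|_{L^2}$ and invert $\Delta$ on the mean-zero subspace.

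First I would check that $\bar{Q}_n$ has zero mean, so that the inversion is legitimate: since $\int_{\bT_L^d}\varrho(x)dx=1$ almost surely, Fubini gives $\int_{(\bT_L^d)^n}Q_n\,d\bx_{1:n}=\E\big[\prod_{j=1}^n\int_{\bT_L^d}\varrho(x_j)dx_j\big]=1=\int_{(\bT_L^d)^n}L^{-nd}\,d\bx_{1:n}$, hence $\bar{Q}_n\in L_0^2\big((\bT_L^d)^n\big)$. Next I would bound the source. Every term of $F$ in \eqref{F} is $\beta^2$ times a product of a translate of $R$ (bounded, with $\int R<\infty$) against one of $Q_n,Q_{n+1},Q_{n+2}$; by the uniform bound \eqref{e.bdQn} one has $\sup Q_m\le C(m,R,L)$ with $C$ independent of $\beta\in(0,1)$, and since $R\in C_0^\infty(\bbR^d)$ the integrations in $x_{n+1},x_{n+2}$ only produce factors of $\|R\|_{L^1}=1$ and $\|R\|_\infty$. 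Consequently $\|F\|_{L^\infty((\bT_L^d)^n)}\le C(n,R,L)\beta^2$, and a fortiori $\|F\|_{L^2((\bT_L^d)^n)}=O(\beta^2)$.

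Finally I would invert the Laplacian using the spectral gap on the torus. The nonzero eigenvalues of $-\Delta$ on $(\bT_L^d)^n=\bT_L^{nd}$ are $(2\pi/L)^2|m|^2$, $0\neq m\in\Z^{nd}$, so for any mean-zero $g$ one has $\|\Delta g\|_{L^2}\ge(2\pi/L)^2\|g\|_{L^2}$. Applying this to $g=\bar{Q}_n$ gives $\|\bar{Q}_n\|_{L^2}\le(L/2\pi)^2\|\Delta\bar{Q}_n\|_{L^2}=2(L/2\pi)^2\|F\|_{L^2}=O(\beta^2)$, which is \eqref{e.err2}. The only point requiring care — and the one I would flag as the crux — is the uniformity in $\beta$: the argument works precisely because the constant in \eqref{e.bdQn} does not depend on $\beta$, so the $\beta^2$ in front of $F$ is not absorbed into a growing coefficient. (For $n=1$ the statement is vacuous, since $Q_1\equiv L^{-d}$ forces $\bar{Q}_1\equiv0$, which is why the lemma is stated for $n\ge2$.)
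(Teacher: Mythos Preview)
Your argument is correct and is essentially the same as the paper's: rewrite the hierarchy as a Poisson equation for $\bar Q_n$ with source of size $\beta^2$, use that $\bar Q_n$ has zero mean, and invert $\Delta$ on $L_0^2$ via its spectral gap. The paper phrases the last step as ``$\Delta^{-1}$ is bounded on $L_0^2$'' and simply asserts that $\|F\|_{L^2}$ is uniformly bounded in $\beta\in(0,1)$; you make both points explicit via the eigenvalue bound and the sup-norm estimate \eqref{e.bdQn}, and you correctly flag the $\beta$-uniformity of that constant (established in Proposition~\ref{p.rho}) as the crux.
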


\begin{proof}
For any $n$, we rewrite \eqref{e.hierarchy} as $\frac12\Delta Q_n=\beta^2F$, 
where $F$ is a smooth function in $L^2_0\big((\bT_L^d)^n\big)$, given by \eqref{F}. Since $\int_{(\bT_L^d)^n}Q_n
  d{\bx }_{1:n}=1$, we have $\bar{Q}_n\in L^2_0\big((\bT_L^d)^n\big)$, which implies\[
\bar{Q}_n=2\beta^2\Delta^{-1}F,
\]
where $\Delta^{-1}$ is the inverse of $\Delta$, which is a bounded
operator from $L^2_0\big((\bT_L^d)^n\big)$ to $L^2_0\big((\bT_L^d)^n\big)$. It remains to use the fact that $\|F\|_{L^2((\bT_L^d)^n)}$ is bounded uniformly in $\beta\in(0,1)$ to complete the proof.
\end{proof}

Recall that $\gamma_L(\beta)$ is related to $Q_2$ through \eqref{e.gammaLbeta} and we have assumed $\int_{\bT_L^d} R(x)dx=1$, the above lemma gives the leading order of $\gamma_L(\beta)$
\[
\gamma_L(\beta)+\frac{\beta^2}{2L^d} =O(\beta^4), \quad\quad \mbox{ as } \beta\to0.
\]

\section{Proof of Theorem~\ref{t.mainth}}
\label{s.proof}

We start from the equation satisfied by $Q_2$
\begin{equation}\label{e.q2}
\begin{aligned}
\frac12&\Delta Q_2+\beta^2 R(x_1-x_2)Q_2\\
=&2\beta^2\int_{\bT_L^d} Q_3(x_1,x_2,x_3)[R(x_1-x_3)+R(x_2-x_3)]dx_3\\
&-3\beta^2\int_{\bT_L^d\times \bT_L^d} Q_4(x_1,x_2,x_3,x_4)R(x_3-x_4)dx_3dx_4.
\end{aligned}
\end{equation}

In light of Lemma~\ref{l.errQn}, we rewrite the equation for $Q_2$ in
terms of $\bar{Q}_2$ (see \eqref{e.barQn}), stated in the following lemma
\begin{lemma}\label{l.eqbarq2}
We have
\begin{equation}\label{e.eqbarQ}
\frac12\Delta \bar{Q}_2=\frac{\beta^2}{L^{2d}}\big(\frac{1}{L^d}-R(x_1-x_2)\big)+\beta^2\mathcal{E}_\beta(x_1,x_2),
\end{equation}
where $\mathcal{E}_\beta(x_1,x_2)$ is a smooth function in $L_0^2(\bT_L^d\times \bT_L^d)$ such that 
\begin{equation}\label{e.bdEbeta}
\begin{aligned}
\|\mathcal{E}_\beta\|_{L^2(\bT_L^d\times \bT_L^d)}=O(\beta^2).
\end{aligned}
\end{equation}
\end{lemma}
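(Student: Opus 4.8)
The plan is to derive \eqref{e.eqbarQ} by a direct substitution into the $Q_2$-equation \eqref{e.q2}, separating the genuinely leading $O(\beta^2)$ contribution from the smaller remainder. Since $\bar{Q}_2=Q_2-L^{-2d}$ differs from $Q_2$ by a constant, $\Delta\bar{Q}_2=\Delta Q_2$, so it suffices to analyze the right-hand side of \eqref{e.q2}. Moving the term $\beta^2 R(x_1-x_2)Q_2$ to the right, I would write $\frac12\Delta Q_2=-\beta^2 R(x_1-x_2)Q_2+2\beta^2\int Q_3[R(x_1-x_3)+R(x_2-x_3)]dx_3-3\beta^2\int Q_4 R(x_3-x_4)\,dx_3dx_4$, and substitute $Q_n=L^{-nd}+\bar{Q}_n$ (see \eqref{e.barQn}) for $n=2,3,4$.

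Next I would collect the ``constant'' part, in which every $Q_n$ is replaced by $L^{-nd}$. Using the normalization $\int_{\bT_L^d}R(z)\,dz=1$ (valid because $\mathrm{supp}\,R\subset[-L/2,L/2]^d$), we have $\int_{\bT_L^d}[R(x_1-x_3)+R(x_2-x_3)]\,dx_3=2$ and $\int_{\bT_L^d\times\bT_L^d}R(x_3-x_4)\,dx_3\,dx_4=L^d$. Hence the constant part of the right-hand side equals
\[
-\beta^2 L^{-2d}R(x_1-x_2)+2\beta^2 L^{-3d}\cdot 2-3\beta^2 L^{-4d}\cdot L^d=\frac{\beta^2}{L^{2d}}\Big(\frac{1}{L^d}-R(x_1-x_2)\Big),
\]
which is precisely the explicit term in \eqref{e.eqbarQ}. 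Everything else is then declared to be $\beta^2\mathcal{E}_\beta$, namely
\[
\mathcal{E}_\beta=-R(x_1-x_2)\bar{Q}_2+2\int_{\bT_L^d}\bar{Q}_3[R(x_1-x_3)+R(x_2-x_3)]\,dx_3-3\int_{\bT_L^d\times\bT_L^d}\bar{Q}_4 R(x_3-x_4)\,dx_3dx_4.
\]

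Finally I would verify the three asserted properties. Smoothness of $\mathcal{E}_\beta$ is immediate from the smoothness of all $Q_n$ (Proposition~\ref{p.hierarchy}) and of $R$. For the bound \eqref{e.bdEbeta}, the first term obeys $\|R(x_1-x_2)\bar{Q}_2\|_{L^2}\le\|R\|_{L^\infty}\|\bar{Q}_2\|_{L^2}$; for the two integral terms, I would observe that partial integration against $R$ defines a bounded operator from $L^2((\bT_L^d)^3)$ (resp.\ $L^2((\bT_L^d)^4)$) into $L^2(\bT_L^d\times\bT_L^d)$, by Cauchy--Schwarz in the integrated variables together with $\|R\|_{L^2(\bT_L^d)}<\infty$, so these terms are controlled by $C\|\bar{Q}_3\|_{L^2}$ and $C\|\bar{Q}_4\|_{L^2}$. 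All three are $O(\beta^2)$ by Lemma~\ref{l.errQn}, giving \eqref{e.bdEbeta}. The membership $\mathcal{E}_\beta\in L_0^2$ follows with no extra work: integrating \eqref{e.eqbarQ} over $(\bT_L^d)^2$, the left side vanishes since $\bar{Q}_2$ is periodic, and the explicit leading term also integrates to zero because $\int_{(\bT_L^d)^2}R(x_1-x_2)\,dx_1dx_2=L^d$; hence $\beta^2\mathcal{E}_\beta$ has zero mean. The computation is entirely elementary and I do not anticipate a genuine obstacle; the only step requiring mild care is the $L^2$ estimate of the two integral terms, where one must correctly convert the a priori $L^2$-control of $\bar{Q}_3,\bar{Q}_4$ on the larger product spaces into $L^2$-control on $\bT_L^d\times\bT_L^d$ via the boundedness of the integral-against-$R$ operators.
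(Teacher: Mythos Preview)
Your proposal is correct and follows essentially the same approach as the paper: substitute $Q_n=L^{-nd}+\bar Q_n$ into \eqref{e.q2}, extract the explicit leading term (your arithmetic matches), define $\mathcal{E}_\beta$ exactly as the paper does, deduce zero mean by integrating \eqref{e.eqbarQ}, and get the $O(\beta^2)$ bound from Lemma~\ref{l.errQn}. The only difference is that you spell out the Cauchy--Schwarz step for the integral-against-$R$ operators, which the paper leaves implicit.
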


\begin{proof}
It is straightforward to check that 
\[
\begin{aligned}
\mathcal{E}_\beta(x_1,x_2)=&2\int_{\bT_L^d}\bar{Q}_3(x_1,x_2,x_3)[R(x_1-x_3)+R(x_2-x_3)]dx_3\\
&-3\int_{\bT_L^d\times \bT_L^d} \bar{Q}_4(x_1,x_2,x_3,x_4)R(x_3-x_4)dx_3dx_4-R(x_1-x_2)\bar{Q}_2(x_1,x_2).
\end{aligned}
\]
Since $\int_{\bT_L^d} R(x)dx=1$, we have $\int_{\bT_L^d\times \bT_L^d}
\mathcal{E}_\beta dx_1 dx_2=0$ from the equation \eqref{e.eqbarQ}. Then we only need to invoke Lemma~\ref{l.errQn} to complete the proof.
\end{proof}


Define $g_L: \bT_L^d\times \bT_L^d\to\R$ as the unique solution in $L_0^2(\bT_L^d\times \bT_L^d)$ to 
\begin{equation}\label{e.calQ2}
\frac12\Delta g_L=\frac{1}{L^{2d}}\Big(\frac{1}{L^d}-R(x_1-x_2)\Big).
\end{equation}
We have $g_L(x_1,x_2)=G_L(x_1-x_2)$ with $G_L:\bT_L\to\R$ solving
\begin{equation}\label{e.eqG}
\Delta G_L(x)=\frac{1}{L^{2d}}\big(\frac{1}{L^d}-R(x)\big), \quad\quad x\in \bT_L^d.
\end{equation}

Using $g_L$, we can refine Lemma~\ref{l.errQn} when $n=2$.
\begin{lemma}\label{l.err1}
We have 
\[
\|\bar{Q}_2-\beta^2g_L\|_{L^2(\bT_L^d\times \bT_L^d)}=O(\beta^4), \quad\quad \mbox{ as } \beta\to0.
\]
\end{lemma}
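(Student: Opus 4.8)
The plan is to subtract a scalar multiple of the defining equation \eqref{e.calQ2} for $g_L$ from the equation \eqref{e.eqbarQ} for $\bar{Q}_2$ and read off that the difference solves a Poisson equation whose right-hand side is already known to be of size $O(\beta^4)$. Concretely, multiplying \eqref{e.calQ2} by $\beta^2$ gives
\[
\frac12\Delta(\beta^2 g_L)=\frac{\beta^2}{L^{2d}}\Big(\frac{1}{L^d}-R(x_1-x_2)\Big),
\]
which is exactly the leading term on the right-hand side of \eqref{e.eqbarQ}. Subtracting this from \eqref{e.eqbarQ} I would obtain
\[
\frac12\Delta\big(\bar{Q}_2-\beta^2 g_L\big)=\beta^2\mathcal{E}_\beta(x_1,x_2).
\]

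Next I would check the mean-zero bookkeeping that lets me invert the Laplacian. Since $\int_{(\bT_L^d)^2}Q_2\,dx_1dx_2=\E[(\int_{\bT_L^d}\varrho)^2]=1$ and $\int_{(\bT_L^d)^2}L^{-2d}=1$, the function $\bar{Q}_2$ has zero mean; $g_L$ lies in $L_0^2(\bT_L^d\times\bT_L^d)$ by definition; hence $\bar{Q}_2-\beta^2 g_L\in L_0^2(\bT_L^d\times\bT_L^d)$. On the other hand $\mathcal{E}_\beta\in L_0^2(\bT_L^d\times\bT_L^d)$ by Lemma~\ref{l.eqbarq2}, so the right-hand side is in the correct space. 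Using that $\Delta^{-1}$ is a bounded operator on $L_0^2(\bT_L^d\times\bT_L^d)$ — the same fact already invoked in the proof of Lemma~\ref{l.errQn} — I can write
\[
\bar{Q}_2-\beta^2 g_L=2\beta^2\,\Delta^{-1}\mathcal{E}_\beta .
\]

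Finally I would take $L^2$ norms and apply the operator bound together with the estimate \eqref{e.bdEbeta}, namely $\|\mathcal{E}_\beta\|_{L^2}=O(\beta^2)$, to get
\[
\|\bar{Q}_2-\beta^2 g_L\|_{L^2(\bT_L^d\times \bT_L^d)}
\le 2\beta^2\,\|\Delta^{-1}\|_{\mathrm{op}}\,\|\mathcal{E}_\beta\|_{L^2(\bT_L^d\times \bT_L^d)}
=O(\beta^4),
\]
which is the claim. I do not anticipate a genuine obstacle here: the statement is essentially a linear stability estimate, and all the analytic input (smoothness and mean-zero property of $\mathcal{E}_\beta$, boundedness of the inverse Laplacian on the zero-mean subspace, and the $O(\beta^2)$ size of $\mathcal{E}_\beta$) is already in hand. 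The only point requiring care is the verification that each of $\bar{Q}_2$, $g_L$, and $\mathcal{E}_\beta$ has vanishing integral, so that subtracting the two Poisson equations stays within $L_0^2$ and the bounded inverse $\Delta^{-1}$ may legitimately be applied.
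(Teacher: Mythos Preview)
Your proposal is correct and follows essentially the same approach as the paper's own proof: subtract the two Poisson equations to obtain $\tfrac12\Delta(\bar{Q}_2-\beta^2 g_L)=\beta^2\mathcal{E}_\beta$, note that the difference has zero mean, and invoke the boundedness of $\Delta^{-1}$ on $L_0^2$ together with \eqref{e.bdEbeta}. The paper's argument is terser---it simply refers back to the proof of Lemma~\ref{l.errQn}---but your explicit verification of the mean-zero conditions is a welcome elaboration rather than a departure.
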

\begin{proof}
By \eqref{e.eqbarQ} and \eqref{e.calQ2}, we know that $f=\bar{Q}_2-\beta^2 g_L$ is the solution to 
\[
\frac12\Delta f=\beta^2 \mathcal{E}_\beta,
\]
with $\int_{\bT_L^d\times \bT_L^d} fdx_1dx_2=0$. Using
\eqref{e.bdEbeta}, we can apply the same argument as for Lemma~\ref{l.errQn} to complete the proof.
\end{proof}

Now we can complete the proof of the main theorem.

\begin{proof}[Proof of Theorem~\ref{t.mainth}]
Recall that 
\[
\gamma_L(\beta)=-\frac12\beta^2\int_{\bT_L^d\times \bT_L^d} R(x_1-x_2)Q_2(x_1,x_2)dx_1dx_2.
\]
We can rewrite the above in the form 
\[
\begin{aligned}
\gamma_L(\beta)=&-\frac12\beta^2\int_{\bT_L^d\times \bT_L^d} R(x_1-x_2)L^{-2d}dx_1dx_2-\frac12\beta^4\int_{\bT_L^d\times \bT_L^d}R(x_1-x_2)g_L(x_1,x_2)dx_1dx_2\\
&-\frac12\beta^4\int_{\bT_L^d\times \bT_L^d} R(x_1-x_2)(\beta^{-2}\bar{Q}_2(x_1,x_2)-g_L(x_1,x_2))dx_1dx_2.
\end{aligned}
\]
Applying Lemma~\ref{l.err1}  and using the fact that 
\[
\begin{aligned}
\int_{\bT_L^d\times \bT_L^d}R(x_1-x_2)g_L(x_1,x_2)dx_1dx_2=L^d\int_{\bT_L^d} R(x)G_L(x)dx,
\end{aligned}
\]
we immediately derive that 
\[
\gamma_L(\beta)=-\frac{\beta^2}{2L^d}-\frac12\beta^4L^d\int_{\bT_L^d}R(x)G_L(x)dx+O(\beta^6).
\]
By \eqref{e.eqG}, we have that the Fourier coefficients of $G$ equal
\[
\hat{G}_L(n)=\int_{\bT_L^d} G_L(x)e^{-i2\pi n\cdot
  x/L}dx=\frac{\hat{R}(n/L)}{4\pi^2 |n|^2L^{2d-2}},\quad n\not=0,
\]
with $\hat{R}(\cdot)$ defined in \eqref{e.defRn}, and $\hat{G}_L(0)=0$. By the Parseval identity, we obtain
\[
\int_{\bT_L^d}R(x)G_L(x)dx=\frac{1}{L^d} \sum_{n\neq 0} \hat{G}_L(n)\hat{R}(n/L)=\sum_{n\neq 0} \frac{\hat{R}(n/L)^2}{4\pi^2 |n|^2 L^{3d-2}}
\]
which completes the proof of \eqref{e.gammaex}. 


In $d=1$, $\hat{R}(\frac{n}{L})\to \hat{R}(0)=1$, as $L\to\infty$, which implies 
\[
\gamma_L^{(4)}=-\frac{1}{8\pi^2}\sum_{n\neq 0} |n|^{-2}\hat{R}(\frac{n}{L})^2\to -\frac{1}{8\pi^2  }\sum_{n\neq 0} |n|^{-2} =-\frac{1}{24}.
\]

In $d=2$,   we divide the summation into two parts:
\[
\gamma_L^{(4)}=-\frac{1}{8\pi^2 L^{2}}\left(\sum_{0\neq |n|<\delta L}+\sum_{|n|\geq \delta L}\right) |n|^{-2}\hat{R}\Big(\frac{n}{L}\Big)^2=:A_1+A_2,
\]
where $\delta>0$ is a   constant to be sent to zero after sending $L\to\infty$. For the second part, we have 
\[
L^2A_2=-\frac{1}{8\pi^2 L^2}\sum_{|n/L|>\delta}
\Big|\frac{n}{L}\Big|^{-2} \hat{R}\Big(\frac{n}{L}\Big)^2\to
-\frac{1}{8\pi^2}\int_{|\xi|>\delta} |\xi|^{-2}\hat{R}(\xi)d\xi,
\quad\mbox{as $L\to\infty$}.
\]
 For the first part, we write it as 
\[
L^2A_1=-\frac{1}{8\pi^2}\sum_{0\neq |n|<\delta L}|n|^{-2}+\frac{1}{8\pi^2} \sum_{|n|<\delta L}|n|^{-2} \left(1-\hat{R}\Big(\frac{n}{L}\Big)^2\right)=:C_1+C_2.
\]
Since $R$ is smooth and $\hat{R}(0)=1$, we have 
\[
\lim_{\delta\to0}\limsup_{L\to\infty}\frac{C_2}{\log L} =0.
\]
For $C_1$, by an elementary calculation we have 
\[
\lim_{\delta\to0}\lim_{L\to\infty}\frac{C_1}{\log L} = -\frac{1}{4\pi},
\]
and this completes the proof of the case in $d=2$.

In $d\geq3$, we have
\[
L^d \gamma_L^{(4)}=-\frac{1}{8\pi^2 L^{d}}\sum_{n\neq 0} \Big|\frac{n}{L}\Big|^{-2}\hat{R}\Big (\frac{n}{L}\Big)^2\to -\frac{1}{8\pi^2}\int_{\R^d} |\xi|^{-2}\hat{R}(\xi)^2d\xi, \quad\mbox{as $L\to\infty$}.
\]
The proof is complete.
\end{proof}

\section{$1+1$ spacetime white noise}
\label{s.white}

\subsection{Spacetime white noise}
In this section, we consider the case when $d=1$ and the  random
potential is a $1+1$ spacetime white noise, in which case
$R(\cdot)=\delta(\cdot)$. Define $h(t,x)=\beta^{-1}\log u(t,x)$, which is the formal
  solution to the KPZ equation, see \eqref{e.kpz} below.
By the results in \cite{bertini1997stochastic,funaki2015kpz,gubinelli2017kpz,hairer2018strong}, we know that the invariant measure for the process $\{h(t,x)-h(t,0):x\in\bT_L\}_{t\geq0}$ is given by the law of the Brownian bridge $\B_{0,L}(\cdot)$ with $\B_{0,L}(0)=\B_{0,L}(L)=0$. Here for the notational convenience, we extend the Brownian bridge periodically and also view it as a process on $\bT_L$.  For the polymer endpoint density, if we write it as
\[
\rho(t,x)=\frac{u(t,x)}{\int_{\bT_L}u(t,x')dx'}=\frac{e^{\beta(h(t,x)-h(t,0))}}{\int_{\bT_L}e^{\beta(h(t,x')-h(t,0))}dx'},
\]
it is immediate to conclude that the invariant measure is given by the
law of random densities on $\bT_L$
\[
\varrho(x)=\frac{e^{\beta \B_{0,L}(x)}}{\int_0^L e^{\beta \B_{0,L}(x')}dx'}.
\]
The random field $\{\varrho(x):x\in\bT_L\} $ is stationary,
therefore, the limiting free energy  in \eqref{e.freeen}
(with $R(\cdot)=\delta(\cdot)$) reduces to 
\begin{equation}\label{e.freewhite}
\begin{aligned}
\gamma_L(\beta)=&-\frac12\beta^2\int_{\bT_L^2} R(x-y) \E [ \varrho(x)\varrho(y)]dxdy\\
=&-\frac12\beta^2 \int_{\bT_L}\E[\varrho(x)^2]dx=-\frac12\beta^2L\E[\varrho(0)^2]\\
=&-\frac12\beta^2L \E (\int_0^L e^{\beta \B_{0,L}(x)}dx)^{-2}.
\end{aligned}
\end{equation}
The random variable   $\int_0^L e^{\beta \B_{0,L}(x)}dx$ appears
frequently in physics and mathematical finance, and we refer to
\cite{matsumoto2005exponential} for an extensive discussion. Its
density function can be written explicitly, see \cite[Proposition 6.2,
p. 527]{yor1992some}, using which we obtain the following proposition:
\begin{proposition}\label{p.white}
In the case of a $1+1$ spacetime white noise, we have 
\begin{equation}\label{e.freewhite1}
\begin{aligned}
\gamma_L(\beta)&=-\frac{\beta^6L}{4\pi}\exp\left\{ \frac{2\pi^2}{
  \beta^2L}\right\}  \int_0^{\infty}\frac{ (e^y-e^{-y})}{(e^{y/2}+e^{-y/2} )^6}\exp\left\{-\frac{2y^2}{ \beta^2L} 
  \right\}\sin\left(\frac{4\pi y}{\beta^2L}\right)dy\\
  &=-\frac{\beta^2}{2L}-\frac{\beta^4}{24}.
  \end{aligned}
\end{equation}
\end{proposition}

\begin{proof}
First, by the scaling property of the Brownian bridge
\[
\int_0^L e^{\beta \B_{0,L}(x)}dx\stackrel{\text{law}}{=}L \int_0^1 e^{\beta\sqrt{L}\B_{0,1}(x)}dx.
\]
  To simplify the notation, define $Y_\lambda=\int_0^1 e^{\lambda \B(x)}dx$, so it remains to compute $\E Y_\lambda^{-2}$. Denote the density of $Y_\lambda$ by $f_\lambda(z)$, by \cite[Proposition 6.2, p. 527]{yor1992some} we have 
\begin{equation}
\label{fla}
f_\lambda (z)=\frac{4}{\pi
 \lambda^2 z^2}\exp\left\{-\frac{4}{\lambda^2 z}+\frac{2\pi^2}{ \lambda^2}\right\}\int_0^{\infty}\exp\left\{-\frac{2y^2}{ \lambda^2}-\frac{4\cosh
  y}{\lambda^2 z}
  \right\}\big(\sinh y\big)\sin\left(\frac{4\pi y}{\lambda^2}\right)dy.
\end{equation}
Using the above density formula, we have \begin{align}
\label{031006-21}
&
\E Y_\lambda^{-2}=\int_0^\infty z^{-2} f_\lambda(z)dz=\frac{4}{\pi
 \lambda^2 }\exp\left\{\frac{2\pi^2}{ \lambda^2}\right\}\int_0^{\infty}\exp\left\{-\frac{4}{\lambda^2 z} \right\}\frac{1}{
  z^4}\\
&
\times \left( \int_0^{\infty}\exp\left\{-\frac{2y^2}{ \lambda^2}-\frac{4\cosh
  y}{\lambda^2z}
  \right\}\big(\sinh y\big)\sin\left(\frac{4\pi y}{\lambda^2}\right)dy\right)dz.\notag
\end{align}
Changing variables $z'=(z\lambda^2)^{-1}$ we get
\begin{align}
\label{031006-21b}
&
\E Y_\lambda^{-2}=\frac{4 \lambda^4}{\pi
 }\exp\left\{ \frac{2\pi^2}{
  \lambda^2}\right\}\int_0^{\infty}z^2\exp\left\{- 4 z \right\} \\
&
\times \left(\int_0^{\infty}\exp\left\{-\frac{2y^2}{ \lambda^2}-4z\cosh
  y 
  \right\}\big(\sinh y\big)\sin\left(\frac{4\pi y}{\lambda^2}\right)dy \right)dz.\notag
\end{align}
Note that
\begin{align*}
\int_0^{\infty}z^2\exp\left\{- 4 z(1+\cosh y) \right\} d z=\frac{1}{32(1+\cosh y)^3}.
\end{align*}
Hence, we get
\begin{align}
\label{031006-21d}
\E Y_\lambda^{-2}&=\frac{ \lambda^4}{8 \pi
 }\exp\left\{ \frac{2\pi^2}{
  \lambda^2}\right\}  \int_0^{\infty}\frac{\sinh y}{(1+\cosh y)^3}\exp\left\{-\frac{2y^2}{ \lambda^2} 
  \right\}\sin\left(\frac{4\pi y}{\lambda^2}\right)dy.
\end{align}

Recall that
\[
\gamma_L(\beta)=-\frac{\beta^2}{2L}\E Y_{\beta\sqrt{L}}^{-2},
\]
to complete the proof of \eqref{e.freewhite1}, it remains to show that
$\E Y_{\lambda}^{-2}=1+\frac{\lambda^2}{12}$. To see the latter note
that, it follows from \eqref{031006-21d} that
\begin{align*}
\E Y_\lambda^{-2}=\frac{ \lambda^4}{16 \pi
 } {\rm Im}\left(\int_{\bbR}\frac{\sinh y}{(1+\cosh
                   y)^3}\exp\left\{-\frac{2(y -\pi i)^2}{ \lambda^2} 
                   \right\} dy\right).                
\end{align*}
Using the relations $\sinh (y+i\pi)=-\sinh y$, $\cosh
(y+i\pi)=-\cosh y$ and the change of variables $y':=y-i\pi$, we can further rewrite the right hand side as being
equal to 
\begin{align*}
& \frac{ \lambda^4}{16 \pi
 } {\rm Im}\left(\int_{\bbR-i\pi}\frac{\sinh y}{(\cosh
                   y-1)^3}\exp\left\{-\frac{2y^2}{ \lambda^2} 
                 \right\} dy\right)\\
  &
    =-\frac{ \lambda^4}{16 \pi
 } {\rm Im}\left(\int_{\bbR+i\pi}\frac{\sinh y}{(\cosh
                   y-1)^3}\exp\left\{-\frac{2y^2}{ \lambda^2} 
                   \right\} dy\right).                
\end{align*}
The last equality follows from the change of variables $y':=-y$
and the fact the integrand is odd. We can write therefore that
\begin{align*}
\E Y_\lambda^{-2}=\frac{ \lambda^4}{32 \pi
 } {\rm Im}\left(\int_{\cal C}\frac{\sinh z}{(\cosh
                   y-1)^3}\exp\left\{-\frac{2z^2}{ \lambda^2} 
                   \right\} dz\right).           
\end{align*}
Here ${\cal C}$ is an arbitrary counter-clockwise oriented, contour surrounding the unique pole $z=0$ of
the holomorphic function $\varphi(z):=  \frac{\sinh z}{(\cosh
                   z-1)^3}\exp\left\{-\frac{2z^2}{ \lambda^2} 
                   \right\}$.
Hence
\begin{align*}
\E Y_\lambda^{-2}=\frac{ \lambda^4}{16 
 } {\rm Re}\,\Big({\rm res}\,\varphi\Big),          
\end{align*}
where ${\rm res}\,\varphi$ is the residuum of the function $\varphi$ at
$0$. To compute the latter, note that for $|z|$ sufficiently small, we
can write
\begin{align*}
&(\cosh
                 z-1)^{-3}=\left(\frac{z^2}{2}\right)^{-3}\left[1+\frac{z^2}{12}+\frac{z^4}{360}+z^6\psi(z)\right]^{-3}=
\frac{8}{z^6}   \left[1-\frac{z^2}{4} +
    \frac{z^4}{30}  +z^6\psi(z)\right].
\end{align*}
Here and below $\psi(z)$ is some generic function holomorphic in a
               neighborhood of $0$. We have also used the expansion $
\frac{1}{(1+z)^3}=1-3z+6z^2+z^3 \psi(z)
$ valid for $|z|<1$.
We also have
 \begin{align*}             
  &
    \sinh z=z\left[1+\frac{z^2}{6}+\frac{z^4}{120}+z^6 \psi(z)\right],\\
  &
  \exp\left\{-\frac{2z^2}{\lambda^2}\right\}=1-\frac{2z^2}{\lambda^2}+  \frac{2z^4}{\lambda^4}+z^6 \psi(z).
 \end{align*}
 Putting all things together we get
 \begin{align*}             
  &\varphi(z) 
      =\frac{8}{z^5}\left\{1+z^2\left(\frac{1}{6}-\frac{2}{\lambda^2}\right)+z^4\left(\frac{1}{120}
    -\frac{1}{3 \lambda^2} +  \frac{2}{\lambda^4}\right)+z^6 \psi(z)\right\}\\
  &
    \times\left[1-\frac{z^2}{4} +
    \frac{z^4}{30}  +z^6 \psi(z)\right].
 \end{align*}
 We conclude therefore that
 $$
 {\rm res}\,\varphi=8\left( 
    \frac{1}{6 \lambda^2} +
                 \frac{2}{\lambda^4}  \right)
$$
and
\begin{align*}
\E Y_\lambda^{-2}=\frac{\lambda^2}{12 } +1.    
\end{align*}
The proof is complete.
%
%
%
\end{proof}

\subsection{Asymptotic expansion of the invariant measure}
\label{s.extension}

From the proof of Theorem~\ref{t.mainth}, it is clear that the expansion in \eqref{e.gammaex} can be extended to an arbitrary high order. We only kept the first two terms since their expressions are more explicit. Our expansion is based on the  the two-point correlation function $Q_2$, because that is what the limiting free energy depends on. It actually corresponds to an expansion of the  invariant measure $\varrho$ in the parameter $\beta$. Below we sketch the heuristic connections.

Suppose that $d\ge 1$ and $R$ is a smooth and compactly supported function.
Define $h(t,x)=\frac{1}{\beta}\log u(t,x)$, which is the solution to the  KPZ equation
\begin{equation}\label{e.kpz}
\partial_th=\frac12\Delta h+\frac12\beta |\nabla h|^2+V-\frac12R(0)\beta.
\end{equation}
We can write the polymer endpoint distribution in terms of $h$ as 
\begin{equation}\label{e.rhoh}
\rho(t,x)=\frac{u(t,x)}{\int_{\bT_L^d}u(t,x')dx'}=\frac{e^{\beta h(t,x)}}{\int_{\bT_L^d}e^{\beta h(t,x')}dx'}=\frac{e^{\beta (h(t,x)-\bar{h}(t))}}{\int_{\bT_L^d}e^{\beta (h(t,x')-\bar{h}(t))}dx'}.
\end{equation}
Here $\bar{h}(t)=L^{-d}\int_{\bT_L^d}h(t,x)dx$ is the average of $h(t,x)$. Thus, an expansion of the stationary distribution of $h(t,\cdot)-\bar{h}(t)$ in $\beta$ would lead to a corresponding expansion of $\varrho$. For $\beta\ll1$, we approximate \eqref{e.kpz} by the Edwards-Wilkinson equation
\[
\partial_t h=\frac12\Delta h+V.
\]
There are no stationary invariant probability measures for the above equation on the torus, as a result of the growth of the zero mode. If we remove the zero mode and consider the following equation
\begin{equation}\label{e.tildeh}
\partial_t\tilde{h}= \frac12\Delta \tilde{h}+V-\bar{V},
\end{equation}
where $\bar{V}(t)=L^{-d} \int_{\bT_L^d}V(t,x)dx$, then as a Markov process it admits a
stationary distribution with an explicit density. Replacing $h(t,x)-\bar{h}(t)$ in
\eqref{e.rhoh} by the stationary solution $\tilde{h}$, i.e. the one
where the initial data is sampled from the invariant distribution, we
obtain the first order approximation of the stationary measure
$\varrho$. It is straightforward to check that the two-point
correlation function of the stationary solution to \eqref{e.tildeh}
is directly related to the solution to
\eqref{e.calQ2}. 

To make the above argument rigorous, one needs to control the error in the approximation of the KPZ equation by the Edwards-Wilkinson equation. For us, it seems more convenient to do it 
  on the level of $Q_2$ through the PDE hierarchy \eqref{e.hierarchy},
  where we may borrow
  analytic tools.


\end{document}